\def\d{\rm d}
\def\th{\theta}
\def\tgamma{\tilde{\gamma}}
\def\tGamma{\tilde{\Gamma}}
\def\tx{\tilde{x}}
\def\ty{\tilde{y}}
\def\tth{\tilde{\th}}
\def\tp{\tilde{p}}
\def\tlambda{\tilde{\lambda}}
\def\tH{\tilde{H}}
\def\tilh{\tilde{h}}
\renewcommand{\i}{\mathrm{i}}
\newcommand{\Arg}{\operatorname{arg}}
\def\R{{\mathbb R}}
\def\Z{{\mathbb Z}}
\newcommand{\SE}{\operatorname{SE(2)}\nolimits}
\newcommand{\spann}{\operatorname{span}}
\newcommand{\tcusp}{t_{cusp}}
\newtheorem{theorem}{Theorem}
\newtheorem{remark}{Remark}
\begin{document}

\title{Relation between Euler's Elasticae and Sub-Riemannian Geodesics~on~$\SE$
\footnote{ The reported study was funded by RFBR, research project No. 16-31-60083 mol\_a\_dk}
}

\author{A.~Mashtakov, A.~Ardentov, Yu.~Sachkov \footnote{CPRC, Program Systems Institute of RAS}}

\maketitle

\begin{abstract}
In this note we describe a relation between Euler's elasticae and sub-Riemannian geodesics on $\SE$. Analyzing the Hamiltonian system of Pontryagin maximum principle we show that these two curves coincide only in the case when they are segments of a straight line.\\
\textbf{Keywords:} elastica, sub-Riemannian geodesic, group of rototranslations.
\end{abstract}
\section*{Introduction}
In this paper we consider two classical geometric control problems~\cite{Jurdjevic, agrachev_sachkov}: the problem of Euler's elasticae and the problem of sub-Riemannian (SR) geodesics on $\SE$. Solution curves to both problems have many applications in mechanics~\cite{Euler, Sachkov2007, ElasticaAAA}, robotics~\cite{Laumond, SachkovSE2}, image processing~\cite{Mumford, Mirebeau, NMTMA, SIIMS} and modelling of human visual system~\cite{Citti_Sarti_B, DuitsJMIV2014}. Although solutions are well known in geometric control community, the authors have noticed a common confusion in applied societies where people sometimes mix these two curves. The reason is that the formulation of the problems is very similar, and at first sight one can wrongly deduce that Euler's elasticae are obtained via reparametrization of SR-geodesics by arclength. To prevent this possible confusion we clarify that these two curves coincide only in the case of straight line.

The structure of the paper is the following. First we briefly expose the history of the problems in Introduction. Afterwards, in Section~\ref{sec:statement}, we formulate both problems as optimal control problems on the Lie group $\SE$ and discuss two natural parameterizations for the solution curves. In Section~\ref{sec:PMP} we apply Pontryagin maximum principle for both problems and prove the main result in Theorem~\ref{thm1}. Finally, in Section~\ref{sec:Comparison}, we present several simulations with comparison of elasticae and SR-geodesics, which support our theoretical result.
\subsubsection*{History of problems}
In 1744 Leonhard Euler considered the problem on stationary configurations of an elastic rod with fixed endpoints and tangents at the endpoints~\cite{Euler}. Euler obtained differential equations for stationary configurations of the rod and described their possible qualitative types. These configurations are called Euler's elasticae.
Euler's elasticae are critical points of the elastic energy functional. The question as to which of the critical points are minima (local or global) was answered in~\cite{Sachkov2007, Sachkov_Elastica_1, Sachkov_Elastica_2, Sachkov_Elastica_3}. Afterwards the software to the boundary value problem (BVP) was proposed in~\cite{ElasticaAAA}, and thus a numerical realization of the optimal synthesis was obtained.

The sub-Riemannian problem on $\SE$ is formulated as follows. By given two unit vectors $v_0 = (\cos \th_0, \sin \th_0)$ and $v_1 = (\cos\th_1, \sin \th_1)$ attached respectively at two given points $q_0 = (x_0, y_0)$ and $q_1 = (x_1, y_1)$ in the plane, to find an optimal motion in the plane that transfers $q_0$ to $q_1$ such that the vector $v_0$ is transferred to the vector $v_1$. The vector can move forward or backward and rotate simultaneously. The required motion should be optimal in the sense of minimal length in the space $(x, y, \th)$, so called SR-length, where $\th$ is the slope of the moving vector. The problem can be seen as an optimal motion planning problem for the Reeds-Shepp car, which can move forward and backward and rotate on a place~\cite{Laumond}. It has important relations to vision~\cite{Citti_Sarti_B, DuitsJMIV2014} and image processing~\cite{NMTMA}. SR-geodesics are critical points of the SR-length functional. Explicit parametrization of the geodesics by Jacobi elliptic functions and elliptic integrals was obtained in~\cite{SachkovMoiseev}. A study of optimality of the geodesics was done in~\cite{SachkovSE2, Sachkov_SE2_2}, where the optimal synthesis was obtained. Here and below in the text by ''geodesic`` we mean a sub-Riemannian geodesic on $\SE$.

\section{Statement of problems}\label{sec:statement}
Both curves (elasticae and geodesics) are determined by optimal control problems on the Lie group $\SE$, the group of Euclidean motions of a plane.

The group $\SE$ is represented by $3 \times 3$ matrices~\cite{SachkovMoiseev}
$$\SE = \left \{\left(\begin{array}{c c c}
\cos\th & \sin\th & x\\
-\sin\th& \cos\th & y \\
0 & 0& 1 \\
\end{array} \right) \mid \th \in S^1, (x,y) \in \R^2\right \}.$$
The group can be naturally identified with the coupled space of positions and orientations $\R^2 \times S^1$ by identifying
$\left(\begin{array}{c c}
\cos\th & \sin\th\\
-\sin\th& \cos\th
\end{array} \right) \leftrightarrow \th\, (\!\!\!\mod 2\pi)$. Then for each $g = (x,y,\theta) \in \R^2 \times S^1 \cong \SE$ one has the left multiplication
$L_g g' = g g' = (x' \cos\th + y' \sin \th + x, -x' \sin\th + y' \cos \th + y, \th'+\th).$
Via the push-forward $(L_g)_*$ of the left-multiplication one gets the left-invariant frame $\{\mathcal{A}_1, \mathcal{A}_2, \mathcal{A}_3\}$ from the Lie-algebra basis $\{A_1,A_2,A_3\} = \{\left.\partial_x\right|_e, \left.\partial_\th\right|_e, \left.\partial_y\right|_e\}$ at the unity $e = (0, 0, 0)$:
%
\begin{equation*} \label{leftinvariant}
\mathcal{A}_{1}= \cos \theta \, \partial_{x} +\sin \theta \,
 \partial_{y}, 
 \quad
 \mathcal{A}_{2}= \partial_{\theta}, 
 \quad
 \mathcal{A}_{3}= -\sin \theta \, \partial_{x} +\cos \theta \,
 \partial_{y}. 
\end{equation*}

For a curve $\gamma(\cdot) = (x(\cdot),y(\cdot),\th(\cdot))$ on $\SE$ its projection $\Gamma(\cdot) = (x(\cdot),y(\cdot))$ on $\R^2$ is called the \emph{spatial projection}. In this note we consider only so-called horizontal curves, that satisfy the horizontality condition $\th(\cdot) = \Arg(\dot{x}(\cdot) + \i \, \dot{y}(\cdot)) \Leftrightarrow \dot{\gamma} \in \spann(\mathcal{A}_1,\mathcal{A}_2)$. There exist two natural parameterizations of horizontal curves on $\SE$ suitable for different needs (see~\cite{DuitsJMIV2014}):
\begin{itemize}
\item Sub-Riemannian (SR) arclength parameter $t$ is defined by $\|\dot{\gamma}(t)\| = 1$, where $\|\dot{\gamma}(t)\| = \sqrt{\xi^2\left(\left(\dot{x}\left(t\right)\right)^2 + \left(\dot{y}\left(t\right)\right)^2\right) + \left(\dot{\th}\left(t\right)\right)^2}$ for a given constant $\xi > 0$. Here and further in the text by dot we denote the derivative $\frac{d}{dt}$.
\item Spatial arclength parameter $s$ is defined by $\|\Gamma'(s)\| = 1$, where $\|\Gamma'(s)\| = \sqrt{\left(x'\left(s\right)\right)^2 + \left(y'\left(s\right)\right)^2}$. Here and further in the text we denote by prime the derivative $\frac{d}{ds}$. The spatial projection $\Gamma(\cdot)$ of a horizontal curve $\gamma(\cdot)$ can have singularities --- cusp points, which appear when the velocity vector $\dot{\gamma}(t)$ has nonzero component only in $\th$ direction.  The spatial arclength parameterization is well-defined only on segments of $\gamma(\cdot)$ whose spatial projection does not have cusps. For details see~\cite{DuitsJMIV2014}.
\end{itemize}

Euler's elastica problem is to find a $W_1^2[0,S]$ curve $\gamma:[0,S] \to \SE$, s.t.
\begin{equation*}\label{eq:elasticaorigsyst}
\begin{array}{c}
\gamma'(s) = \mathcal{A}_{1}|_{\gamma(s)} + u(s) \, \mathcal{A}_{2}|_{\gamma(s)}, \\[5pt]
\gamma(0) = g_0, \quad \gamma(S) = g_1,
\end{array}
\quad \int_0^{S} \frac{u(s)^2}{2} \,  \d s \to \min,
\end{equation*}
where the control $u: [0,S] \to \R$ is $L^2$ function, the initial point $g_0 = (x_0,y_0,\th_0)$ and the terminal point $g_1 = (x_1,y_1,\th_1)$ are given, and the terminal time $S>0$ is fixed and sufficiently large to guarantee that $g_0$ can be connected with $g_1$.

Notice that minimization of $ \int_0^{S} \frac{u^2}{2} \, ds$ in the elastica problem is equivalent to minimization of  $ \int_0^{S} \frac{\xi^2 + u^2}{2} \, ds$ for any $\xi>0$. Thanks to left-invariance we can set $g_0 = e = (0,0,0)$ without loss of generality.  Thus this problem is equivalent to
\begin{equation}\label{eq:elasticasyst}
\begin{array}{c}
\gamma'(s) = \mathcal{A}_{1}|_{\gamma(s)} + u(s) \, \mathcal{A}_{2}|_{\gamma(s)}, \\[5pt]
\gamma(0) = e, \quad \gamma(S) = g_1,
\end{array}
\quad \int_0^{S} \frac{\xi^2 + u(s)^2}{2} \, \d s \to \min.
\end{equation}

SR-problem on $\SE$ is to find a Lipschizian curve $\tgamma:[0,T] \to \SE$, s.t.
\begin{equation*}\label{eq:geodcontsystorig}
\begin{array}{c}
\dot \tgamma(t) = u_1(t) \, \mathcal{A}_{1}|_{\tgamma(t)} + u_2(t) \, \mathcal{A}_{2}|_{\tgamma(t)}, \\[5pt]
\tgamma(0) = g_0, \quad \tgamma(T) = g_1,
\end{array}
\quad \int_0^{T}\sqrt{\xi^2 u_1(t)^2 + u_2(t)^2} \, \d t \to \min,
\end{equation*}
where the controls $u_1, u_2: [0,T] \to \R$ are $L^\infty$ functions, the initial point $g_0$ and the terminal point $g_1$ are given, $\xi>0$ is a constant and the terminal time $T>0$ is free.

By Cauchy-Schwarz inequality and left-invariance the problem is equivalent to
\begin{equation}\label{eq:geodcontsystsquared}
\begin{array}{c}
\dot \tgamma(t) = u_1(t) \, \mathcal{A}_{1}|_{\tgamma(t)} + u_2(t) \, \mathcal{A}_{2}|_{\tgamma(t)}, \\[5pt]
\tgamma(0) = e, \quad \tgamma(T) = g_1,
\end{array}
\quad \int_0^{T}\frac{\xi^2 u_1(t)^2 + u_2(t)^2}{2} \, dt \to \min,
\end{equation}
where terminal time $T>0$ now is fixed.

\begin{remark}\label{remark:controls}
The control parameter $u(\cdot)$ represents the curvature of the spatial projection $\Gamma(\cdot) = (x(\cdot),y(\cdot))$ of a trajectory $\gamma(\cdot)$ in (\ref{eq:elasticasyst}), while the control $u_1(\cdot)$ and $u_2(\cdot)$ represent the ``spatial'' and ``angular'' components of velocity vector $\dot{\tgamma}(\cdot)$ in (\ref{eq:geodcontsystsquared}). Notice that $\Gamma(\cdot)$ is parameterized by spatial arclength $s$, i.e., $\|\Gamma'(s)\| = 1$ for $s \in [0,S]$, while the spatial projection $\tGamma(\cdot) = (\tx(\cdot),\ty(\cdot))$  of a trajectory $\tgamma(\cdot)$ in (\ref{eq:geodcontsystsquared}) satisfies $\|\dot \tGamma(t)\| = |u_1(t)|$ for $t \in [0,T]$.
\end{remark}

Comparing (\ref{eq:elasticasyst}) and (\ref{eq:geodcontsystsquared}) with account of Remark~\ref{remark:controls} one may wrongly deduce, that if the spatial projection $\tGamma(\cdot)$ is reparameterized by spatial arclength $s$ (corresponding to the control $u_1=1$) and the terminal time $S$ is chosen such that $S = \int_0^T u_1(t) \, dt$, then $\Gamma(\cdot)$ coincides with $\tGamma(\cdot)$. Although the substitution $t=s$, $T=S$, $u_1 = 1$ and $u_2 = u$ in (\ref{eq:geodcontsystsquared}) indeed gives (\ref{eq:elasticasyst}), we will show that $\Gamma(\cdot)$ coincides with $\tGamma(\cdot)$  only when $u_2 = u = 0$, i.e., when the both curves are segments of a straight line.
\section{Application of Pontryagin Maximum Principle}\label{sec:PMP}
 Here we apply Pontryagin Maximum Principle (PMP)~\cite{Pontryagin, agrachev_sachkov} to (\ref{eq:elasticasyst}) and (\ref{eq:geodcontsystsquared}). It can be shown, that abnormal extremals in (\ref{eq:elasticasyst}) are given by straight lines, and they are contained in the set of normal extremals (see~\cite{Sachkov2007}). In (\ref{eq:geodcontsystsquared}) abnormal extremals do not exist~\cite{SachkovMoiseev}. Thus we consider only the normal case.
\subsection{Hamiltonian System for Elasticae}\label{subsec:HamSystElast}
The control dependent Hamiltonian of PMP in problem (\ref{eq:elasticasyst}) reads as
\[\begin{array}{l}
H_u(\lambda,g) = \langle\lambda, \mathcal{A}_1 + u \, \mathcal{A}_2 \rangle -\frac{\xi^2 +  u^2}{2}, \text{ with } \lambda = \sum \limits_{k=1}^{3} p_k \,  {\rm d}  g^k \in T^{\ast}_g \SE,
\end{array}\]
where $\langle\cdot,\cdot\rangle$ denotes the action of a covector on a vector, and $({\rm d} g^1, {\rm d} g^2, {\rm d} g^3) = ({\rm d}  x, {\rm d}  y, {\rm d}  \theta)$ are basis one forms.

The maximization condition of PMP reads as
\begin{eqnarray*}
 \displaystyle{H_{u(s)}(\lambda(s),\gamma(s)) =
\max_{u\in\R}}\left(\langle\lambda(s), \mathcal{A}_1|_{\gamma(s)} + u \, \mathcal{A}_2|_{\gamma(s)} \rangle -\frac{\xi^2 +  u^2}{2}\right),
\end{eqnarray*}
where $u(s)$ denotes an extremal control and $(\lambda(s),\gamma(s))$ is an extremal.

The maximization condition gives the expression for the extremal control
\begin{equation*}
u(s) = \langle\lambda(s), \mathcal{A}_2|_{\gamma(s)}\rangle = p_3(s).
\end{equation*}
Then the maximized Hamiltonian reads as
\begin{equation*}\label{eq:theHamiltonianEl}
H = \langle\lambda, \mathcal{A}_1|_{\gamma}\rangle + \frac{\langle\lambda, \mathcal{A}_2|_{\gamma}\rangle^2- \xi^2}{2}= p_1 \cos\th + p_2 \sin \th + \frac{p_3^2 - \xi^2}{2}.
\end{equation*}
The Hamiltonian system with the Hamiltonian $H$ is defined as
\[ x' = \frac{\partial H}{\partial p_1}, \, \,
y' = \frac{\partial H}{\partial p_2}, \, \,
\th' = \frac{\partial H}{\partial p_3}, \qquad
p_1' = -\frac{\partial H}{\partial x}, \, \,
p_2' = -\frac{\partial H}{\partial y}, \, \,
p_3' = -\frac{\partial H}{\partial \th}.
\]
Thus, the Hamiltonian system for problem (\ref{eq:elasticasyst}) reads as
\begin{equation}
\label{eq:hamsys}
\begin{array}{ll}
\begin{cases}
p_1' = 0,\\
p_2' = 0, \\
p_3' = p_1 \sin\th - p_2 \cos \th
\end{cases} 
& \begin{cases}
x' =  \cos \th , \\
y' =  \sin \th, \\
\th' = p_3
\end{cases} \\[20pt] 
 \text{--- the vertical part,}
&  \text{--- the horizontal part,}
\end{array}
\end{equation}
with the boundary conditions
\begin{equation*}\label{eq:hamsysBound}
\begin{array}{l l l l l l}
p_1(0) = p_1^0, \,& \,p_2(0) = p_2^0, \, & \, p_3(0) = p_3^0, \, \, \, & \, \, \,
x(0) = 0, \,&\, y(0) = 0,  \,&\, \th(0) = 0.
\end{array}
\end{equation*}
\subsection{Hamiltonian System for SR Geodesics}\label{subsec:HamSystGeod}
The control dependent Hamiltonian of PMP in problem (\ref{eq:geodcontsystsquared}) reads as
\[\begin{array}{l}
\tH_u(\tlambda,g) = \langle\tlambda, u_1 \mathcal{A}_1 + u_2 \, \mathcal{A}_2 \rangle -\frac{\xi^2 u_1^2 +  u_2^2}{2}, \text{ with } \tlambda = \sum \limits_{k=1}^{3} \tp_k \,  {\rm d}  g^k \in T^{\ast}_g \SE.
\end{array}\]

The maximization condition of PMP reads as
\begin{eqnarray*}
 \displaystyle{\tH_{u(t)}(\tlambda(t),\tgamma(t)) =
\max_{(u_1,u_2)\in\R^2}}\left(\langle\tlambda(t), u_1 \, \mathcal{A}_1|_{\tgamma(t)} + u_2 \, \mathcal{A}_2|_{\tgamma(t)} \rangle -\frac{\xi^2 u_1^2 +  u_2 ^2}{2}\right).
\end{eqnarray*}

The maximization condition gives the expression for the extremal controls
\begin{equation*}
u_1(t) =  \frac{\tp_1(t) \cos\tth(t) + \tp_2(t) \sin\tth(t)}{\xi^2}, \qquad u_2(t) = \tp_3(t).
\end{equation*}
Then the maximized Hamiltonian reads as
\begin{equation*}\label{eq:theHamiltonianGeod}
\tH = \frac12\left(\frac{\left(\tp_1 \cos\tth + \tp_2 \sin \tth\right)^2}{\xi^2} + \tp_3^2\right).
\end{equation*}
The Hamiltonian system for problem (\ref{eq:geodcontsystsquared}) reads as
\begin{equation}
\label{eq:hamsysgeod}
\begin{array}{ll}
\begin{cases}
\dot{\tp}_1 = 0,\\
\dot{\tp}_2 = 0, \\
\dot{\tp}_3 = \frac{\left(\tp_1 \cos\tth + \tp_2 \sin\tth\right)\left(\tp_1 \sin\tth - \tp_2 \cos \tth\right)}{\xi^2}
\end{cases} 
& \begin{cases}
\dot{\tx} =  \frac{\tp_1 \cos\tth + \tp_2 \sin\tth}{\xi^2} \cos \tth , \\
\dot{\ty} =  \frac{\tp_1 \cos\tth + \tp_2 \sin\tth}{\xi^2} \sin \tth, \\
\dot{\tth} = \tp_3
\end{cases} \\[20pt] 
 \text{--- the vertical part,}
&  \text{--- the horizontal part,}
\end{array}
\end{equation}
with the boundary conditions
\begin{equation*}\label{eq:hamsysBoundGeod}
\begin{array}{l l l l l l}
\tp_1(0) = \tp_1^0, \,& \,\tp_2(0) = \tp_2^0, \, & \, \tp_3(0) = \tp_3^0, \, \, \, & \, \, \,
\tx(0) = 0, \,&\, \ty(0) = 0,  \,&\, \tth(0) = 0.
\end{array}
\end{equation*}
\begin{remark}\label{rm:hamsysgeodspar}
Switching to spatial arclength parameter $s(t) = \int_0^t u_1(\tau) {\rm d} \tau$ (well-defined before the first cusp) leads to the following Hamiltonian system:
\begin{equation}
\label{eq:hamsysgeodspar}
\begin{array}{ll}
\begin{cases}
\tp_1' = 0,\\
\tp_2' = 0, \\
\tp_3' = \tp_1 \sin\tth - \tp_2 \cos \tth
\end{cases} 
& \begin{cases}
\tx' =  \cos \tth , \\
\ty' =   \sin \tth, \\
\tth' = \frac{\tp_3 \,  \xi^2}{\tp_1 \cos\tth + \tp_2 \sin\tth}
\end{cases} \\[20pt] 
 \text{--- the vertical part,}
&  \text{--- the horizontal part.}
\end{array}
\end{equation}
\end{remark}
\subsection{Relation between Ealsticae and SR-Geodesics}\label{subsec:MainThm}
By analyzing (\ref{eq:hamsys}) and (\ref{eq:hamsysgeod}) one gets the following result.
\begin{theorem}\label{thm1}
Let $\Gamma = \{(x(s), y(s))\,|\, s \in [a,b]\}$ be an elastica, and let $\tgamma = \{(\tx(t),\ty(t),\tth(t))\,|\,t \in [\alpha, \beta]\}$ be a SR-geodesic such that $\dot{\tx}^2 + \dot{\ty}^2 \neq 0$ for all $t \in [\alpha, \beta]$. Further, let $\displaystyle s(t) = \int_\alpha^t \sqrt{\dot{x}^2 + \dot{y}^2} d\tau$, and let $t=t(s)$, $s\in[0,S]$, $S = s(\beta)$, be the inverse function.  If
\begin{equation}\label{eq:twocurves}
(x(s), y(s)) \equiv (\tx(t(s)), \ty(t(s))), \quad s \in (c,d)
\end{equation}
for some interval $(c,d) \subset [a,b] \cap [0,S]$, $c<d$, then the both curves $\Gamma$ and $\tGamma = \{(\tx(t), \ty(t))\,|\, t\in[\alpha,\beta]\}$ are straight line segments.
\end{theorem}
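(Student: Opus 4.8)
The plan is to pass to the common spatial arclength parameter, to read off from the Hamiltonian systems (\ref{eq:hamsys}) and (\ref{eq:hamsysgeodspar}) the relation between the signed curvature and the tangent angle for each of the two curves, and then to observe that these relations cannot agree along a nondegenerate arc unless the tangent angle is constant --- in which case both curves are straight line segments. The point is that the elastica relation is a trigonometric polynomial in the angle, whereas the geodesic relation has a $\cos^{-2}$ singularity which the no-cusp hypothesis $\dot\tx^2+\dot\ty^2\neq0$ forbids from cancelling.

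First I would show that (\ref{eq:twocurves}) forces the two curves to share the same tangent angle. The elastica $\Gamma$ is unit-speed, and so is the reparametrized projection $s\mapsto(\tx(t(s)),\ty(t(s)))$ by the definition of $s(t)$ (the spatial arclength of $\tGamma$); differentiating (\ref{eq:twocurves}) in $s$ therefore gives $(\cos\th,\sin\th)=\pm(\cos\psi,\sin\psi)$ on $(c,d)$, where $\psi(s):=\tth(t(s))$ and the sign is that of $u_1$, constant because $u_1$ is continuous and, by hypothesis, nonvanishing. Hence $\th$ and $\psi$ differ on $(c,d)$ by an integer multiple of $\pi$; in particular $\th'=\psi'$ and $\cos^{2}(\th-c)=\cos^{2}(\psi-c)$ for every constant $c$. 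By Remark~\ref{rm:hamsysgeodspar}, $\psi$ satisfies (\ref{eq:hamsysgeodspar}) with the constants $\tp_1,\tp_2$ (up to an immaterial overall sign on the right-hand side).

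Next I would extract the first integrals and compare. In (\ref{eq:hamsys}) one has $p_1'=p_2'=0$ and $\th''=p_3'=p_1\sin\th-p_2\cos\th$, so $(\th')^{2}+2(p_1\cos\th+p_2\sin\th)\equiv\const$; writing $p_1=r\cos\beta$, $p_2=r\sin\beta$ this reads $(\th')^{2}=2\mathcal{E}-2r\cos(\th-\beta)$ with $\mathcal{E}=\const$, a trigonometric polynomial in $\th$. For the geodesic, the Hamiltonian $\tH=\tfrac12\bigl(\xi^{-2}(\tp_1\cos\psi+\tp_2\sin\psi)^{2}+\tp_3^{2}\bigr)$ is a first integral; substituting $\tp_3=\pm\frac{\psi'(\tp_1\cos\psi+\tp_2\sin\psi)}{\xi^{2}}$ from (\ref{eq:hamsysgeodspar}) and writing $\tp_1=\tilde{r}\cos\tilde{\beta}$, $\tp_2=\tilde{r}\sin\tilde{\beta}$ gives $(\psi')^{2}=\frac{2\tH\xi^{4}}{\tilde{r}^{2}\cos^{2}(\psi-\tilde{\beta})}-\xi^{2}$. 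Here $\tilde{r}\neq0$ and $\tH\neq0$, for otherwise $\tp_1\cos\psi+\tp_2\sin\psi\equiv0$, i.e.\ $u_1\equiv0$, contradicting the hypothesis. Equating the two expressions via $\th'=\psi'$ and $\cos^{2}(\psi-\tilde{\beta})=\cos^{2}(\th-\tilde{\beta})$ yields, on $(c,d)$,
\[
\bigl(2\mathcal{E}-2r\cos(\th-\beta)+\xi^{2}\bigr)\,\tilde{r}^{2}\cos^{2}(\th-\tilde{\beta})=2\tH\xi^{4}.
\]
If $\th$ were nonconstant on $(c,d)$, its values would fill a nondegenerate interval on which the left-hand side --- a trigonometric polynomial in $\th$ --- equals the constant $2\tH\xi^{4}$; by real-analyticity the identity would then hold for all $\th\in\R$, and evaluating it at a $\th$ with $\cos(\th-\tilde{\beta})=0$ would give $2\tH\xi^{4}=0$, contradicting $\tH\neq0$. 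Hence $\th\equiv\th_0$ on $(c,d)$, and $\psi$ is constant there as well (since $t(\cdot)$ is a diffeomorphism). Then $\th'\equiv0$ gives $p_3\equiv0$, hence $p_3'\equiv0$ and $p_1\sin\th_0-p_2\cos\th_0=0$ on $(c,d)$, so the globally defined solution $\bigl(x(s_1)+(s-s_1)\cos\th_0,\ y(s_1)+(s-s_1)\sin\th_0,\ \th_0,\ p_1,\ p_2,\ 0\bigr)$ of (\ref{eq:hamsys}) agrees with the elastica at some $s_1\in(c,d)$; by uniqueness for this analytic system the elastica coincides with it on all of $[a,b]$, and $\Gamma$ is a straight line segment. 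The same argument applied to $\psi$ and (\ref{eq:hamsysgeodspar}) --- or directly to the globally smooth system (\ref{eq:hamsysgeod}) --- shows $\tth\equiv\const$ on $[\alpha,\beta]$, so $\tGamma$ is a straight line segment too.

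I expect the comparison to be the main obstacle: one must clear the $\cos^{-2}$ denominator before invoking analyticity, and must treat the degenerate case $\th\equiv\const$ on $(c,d)$ separately, via ODE uniqueness rather than via the algebraic identity. The first-step bookkeeping --- the sign of $u_1$, and the legitimacy of the spatial reparametrization --- is routine once the hypothesis $\dot\tx^2+\dot\ty^2\neq0$ is used.
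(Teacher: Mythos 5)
Your proof is correct, but it takes a genuinely different route from the paper's. You work entirely on the interval $(c,d)$ and compare the two first integrals pointwise: for the elastica, $(\th')^{2}$ is a trigonometric polynomial in $\th$ (the pendulum energy), while for the geodesic, $(\psi')^{2}$ carries a $\cos^{-2}$ pole; after clearing denominators, a nonconstant $\th$ would force a trigonometric polynomial to equal the nonzero constant $2\tH\xi^{4}$ identically, which fails at the zeros of $\cos(\th-\tilde{\beta})$, and the degenerate case $\th\equiv\const$ is finished by ODE uniqueness. The paper argues instead globally: it analytically continues the coincidence identity from $(c,d)$ to the whole cusp-free interval $(0,s_{cusp})$, invokes the stratification $C_1,\dots,C_5$ of initial covectors from \cite{SachkovMoiseev} to show that, away from the straight-line and degenerate strata, a cusp is reached at finite spatial arclength (possibly after time reversal), and then contrasts the boundedness of the elastica's curvature $\kappa=h_2$ (which follows from the Casimir and the Hamiltonian) with the blow-up $\tilde{\kappa}(s)=\tilh_2/\tilh_1\to\infty$ as $s\to s_{cusp}$. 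Your argument is more local and self-contained --- it needs neither the stratification nor the finiteness of the cusp time, and it avoids the analytic-continuation step across $[0,s_{cusp})$ --- at the price of an explicit computation with the first integrals; the paper's argument leans on external structural facts about SE(2) geodesics but gives the geometric picture (bounded curvature versus cusps) directly. Two cosmetic points: your symbols $\beta$ and $c$ in $p_1=r\cos\beta$ and $\cos^{2}(\th-c)$ collide with the endpoint names in the theorem statement, and the sign bookkeeping for $u_1<0$ (reversal of the spatial parameter relative to Remark~\ref{rm:hamsysgeodspar}) deserves one explicit sentence, though, as you note, it disappears upon squaring.
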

\begin{proof}
First, by scaling homothety (see~\cite{SachkovMoiseev}) we set $\xi=1$ without loss of generality. Now, we rewrite the Hamiltonian systems (\ref{eq:hamsys}) and (\ref{eq:hamsysgeod}) via the left-invariant Hamiltonians $h_i = \langle p, \mathcal{A}_i \rangle$ and $\tilh_i = \langle \tp, \mathcal{A}_i \rangle$:
\begin{equation}\label{eq:hamsyssmov}
\begin{array}{ll}
(\ref{eq:hamsys}) \Leftrightarrow \begin{cases}h_1' = h_3 h_2,\\ h_2' = -h_3,\\  h_3' = - h_1 h_2,\\ x' = \cos\th,\\ y' = \sin\th,\\ \th' = h_2,\end{cases}
\quad
&
\quad
(\ref{eq:hamsysgeod}) \Leftrightarrow \begin{cases}\dot{\tilh}_1 = \tilh_3 \tilh_2 ,\\ \dot{\tilh}_2 = -\tilh_3 \tilh_1,\\  \dot{\tilh}_3 = - \tilh_2 \tilh_1,\\ \dot{\tx} =  \tilh_1 \cos\tth,\\ \dot{\ty} =  \tilh_1 \sin\tth,\\ \dot{\tth} = \tilh_2,\end{cases}
\\[40pt]
 \text{~~~~--- the elastica system,}
\quad  & \quad
 \text{~~~~--- the geodesic system.}
\end{array}
\end{equation}
a) The maximally continued elastica $\{(x(s),y(s))\,|\, s \in \R\}$, $(x')^2+(y')^2 \equiv 1$, has curvature $\kappa(s) = \th'(s) = h_2(s)$. The elastica system has first integrals $h_1^2 + h_3^2 \equiv const$ (Casimir function) and $h_1 + \frac{h_2^2}{2} \equiv const$ (the Hamiltonian), which implies that $h_2(s)$ is bounded. Thus the function $\kappa(s)$ is bounded on each elastica: $|\kappa(s)| \leq M$, $s \in \R$.

b) For the maximally continued SR-geodesic $\{(\tx(t),\ty(t),\tth(t))\,|\, t \in \R\}$, $\dot{\tx}^2+\dot{\ty}^2+\dot{\tth}^2 \equiv 1$, define the first cusp time after $t = \beta$:
$$t_{cusp} = \inf\{t>\beta\,|\, (\dot{\tx}^2+\dot{\ty}^2)(t)=0\}.$$
The function $s(t) = \displaystyle \int_\alpha^t \sqrt{\dot{\tx}^2+\dot{\ty}^2} d\tau$ is real analytic and increasing for $t \in [\alpha, t_{cusp})$, thus one can define an inverse function $t = t(s)$, $s \in [0,s_{cusp})$,
$$s_{cusp} = \lim\limits_{t \to \tcusp-0} s(t) = \int\limits_\alpha^{t_{cusp}} \sqrt{\dot{\tx}^2+\dot{\ty}^2} d\tau.$$
So there exists a real analytic vector function $(\tx(t(s)),\ty(t(s)))$, $s \in [0, s_{cusp})$. Its image on $\R^2$ is a curve $\tGamma$, whose curvature is given by $\tilde{\kappa}(s) = \frac{\tilh_2(t(s))}{\tilh_1(t(s))}.$ 
Notice that at the cusp point $\tilh_1(t_{cusp}) = 0$ and $|\tilh_2(t_{cusp})| = 1$, thus $\lim\limits_{s \to s_{cusp}-0} \tilde{\kappa}(s) = \infty$.

%
%
Introducing a polar angle $\varphi \in [0, 4 \pi]$ in the plane $(\tilh_1, \tilh_2)$, from the geodesic system one gets (see~\cite{SachkovMoiseev} for details):
$$\dot{\tx}^2+\dot{\ty}^2 = \sin^2 \frac{\varphi}{2} = 0 \Leftrightarrow \varphi = 2 \pi n, \quad n \in \Z.$$
Let $(x(t),y(t),\th(t)) = Exp(\lambda, t)$, $\lambda \in C$.

If $\lambda \in C_4$ then $\dot{\tx}^2 + \dot{\ty}^2 \equiv 0$, thus identity~(\ref{eq:twocurves}) is impossible.

If $\lambda \in C_5$ then the curves $\Gamma$ and $\tGamma$ are straight line segments.

If $\lambda \in C_1 \cup C_2$ then
\begin{equation}\label{eq:tcuspC1C2}
t_{cusp} < +\infty \Rightarrow s_{cusp}<+\infty.
\end{equation} 

And if $\lambda \in C_3$, then either~(\ref{eq:tcuspC1C2}) or
$$t_{cusp}^{-} = \sup\{t<\alpha \, | \, (\dot{\tx}^2 + \dot{\ty}^2)(t)= 0\}> -\infty,$$
thus $s_{cusp}^{-} = \int\limits_{\beta}^{t_{cusp}^{-}} \sqrt{\dot{\tx}^2 + \dot{\ty}^2} d\tau > -\infty.$ 
Consequently, we can assume inequalities~(\ref{eq:tcuspC1C2}), possibly, after time reversal on $\tgamma$.  

c) Suppose we have
\begin{equation}\label{eq:twocurvest}
(x(s), y(s)) \equiv (\tx(t(s)), \ty(t(s))),
\end{equation}
for all $s \in [c,d] \subset [0,s_{cusp})$, $c<d$.

All the functions in~(\ref{eq:twocurvest}) are real analytic for $s \in (c,d)$ and are analytically continued to the interval $I=(0, s_{cusp})$. By the uniqueness theorem for analytic functions, identity~(\ref{eq:twocurvest}) holds for $s \in I$. Thus $\kappa(s) \equiv \tilde{\kappa}(s)$, $s \in I$. But $\kappa(s)$ is bounded on $I$, while $\lim\limits_{s \to s_{cusp}-0} \tilde{\kappa}(s) = \infty$, a contradiction.
\end{proof}

\section{Comparison of Elasticae and SR Geodesics}\label{sec:Comparison}
In this section we support our theoretical result by series of simulations, where we compare elasticae and SR-geodesics.

In the first simulation we fix the initial momentum $\lambda(0)$, integrate the Hamiltonian systems (\ref{eq:hamsys}) and (\ref{eq:hamsysgeodspar}) with this initial momentum, and plot the projection in the plane of the corresponding trajectories, see top row in Figure~\ref{fig:examp1}. The experiment shows that SR-geodesic provides a good local approximation for the elastica in a neighborhood of the origin, when the same initial momentum was used for both systems. Although, these two curves coincide one with another only in the case of straight line.

In the second simulation we show the difference between optimal elasticae and SR-minimizers (optimal geodesics) in solution of the boundary value problem (BVP). We organize the experiment as follows. Fix the initial point $g_0 = (0,0,0)$ and the terminal point $g_1$; compute the SR-minimizer $\tilde{\gamma}$ departing from $g_0$ and arriving at $g_1$; compute the length $l$ of the spatial projection $\tilde{\Gamma}$ of the minimizer $\tilde{\gamma}$; compute the optimal elastica $\gamma$ that connects $g_0$ with $g_1$ and has length $l$; plot the projection in the plane of both curves. See the bottom row in Figure~\ref{fig:examp1}. The experiment clearly shows the difference between these two curves, and again they coincide one with another only in the case of a segment of a straight line. Subsequently we show that the result is stable with respect to change of parameter $\xi$ that balance penalization of spatial and angular displacement of SR-geodesics, see the left column of the bottom row in Figure~\ref{fig:examp1}.

\begin{figure}[ht]
\centering
\begin{minipage}{0.31\hsize}
\includegraphics[width=\linewidth]{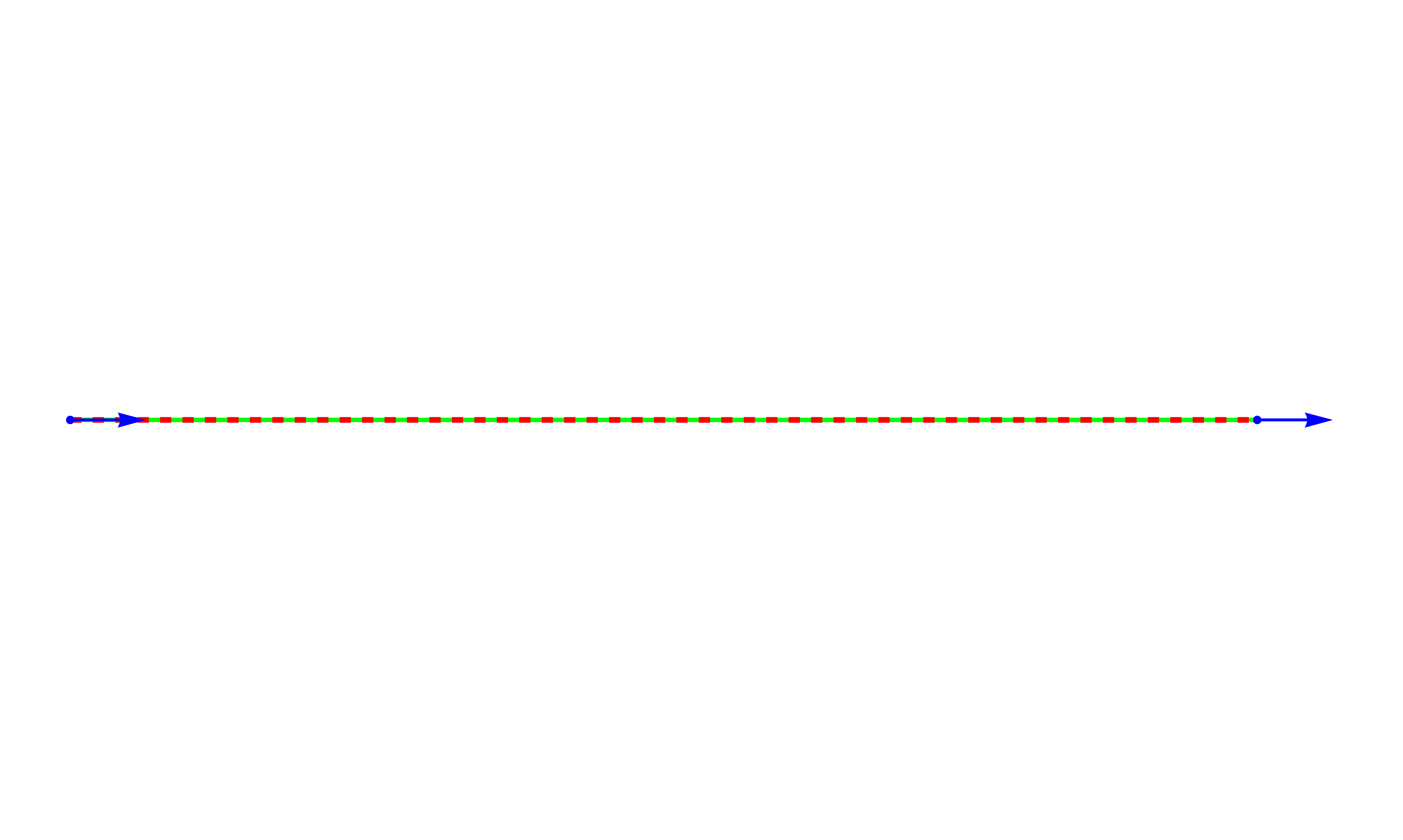}
\end{minipage}
~
\begin{minipage}{0.31\hsize}
\includegraphics[width=\linewidth]{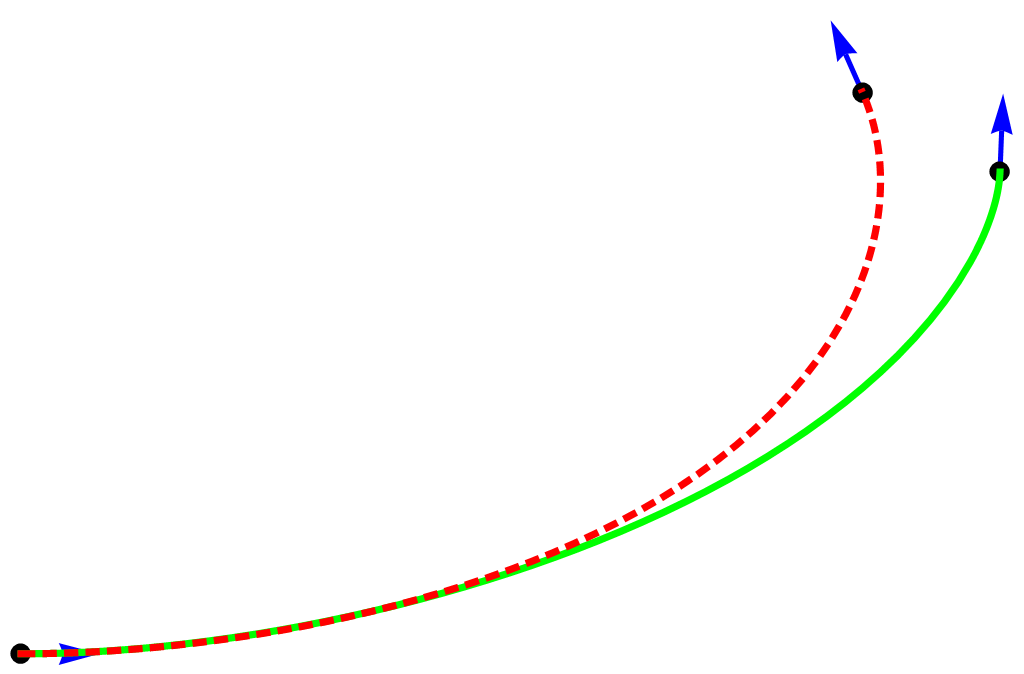}
\end{minipage}
~
\begin{minipage}{0.31\hsize}
\includegraphics[width=\linewidth]{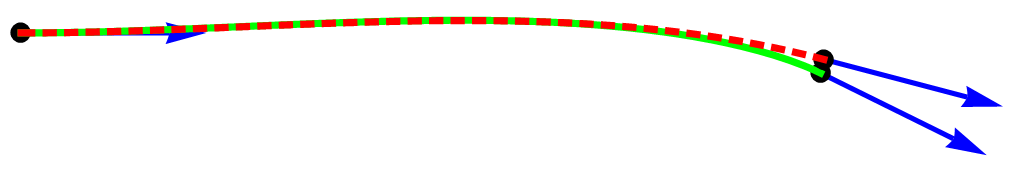}
\end{minipage}
\\
\begin{minipage}{0.31\hsize}
\includegraphics[width=\linewidth]{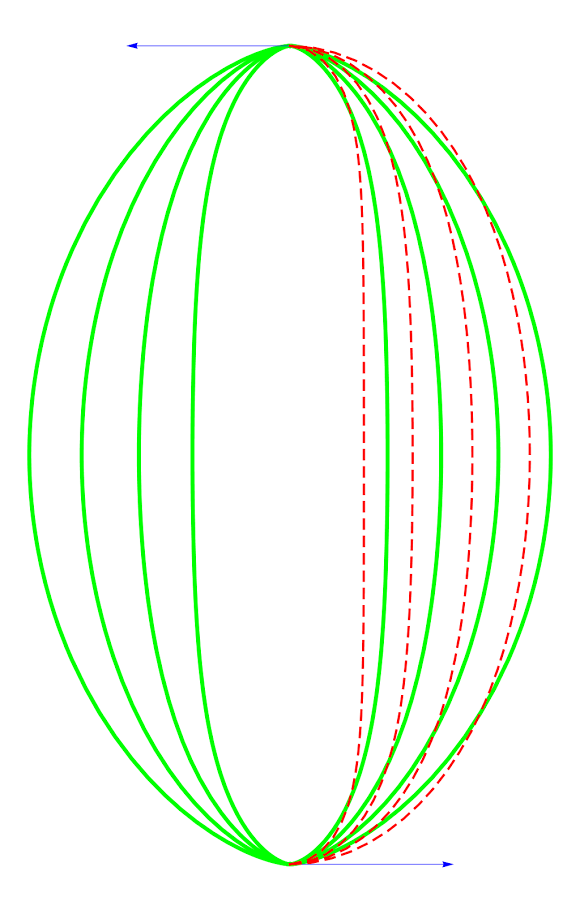}
\end{minipage}
~
\begin{minipage}{0.31\hsize}
\includegraphics[width=\linewidth]{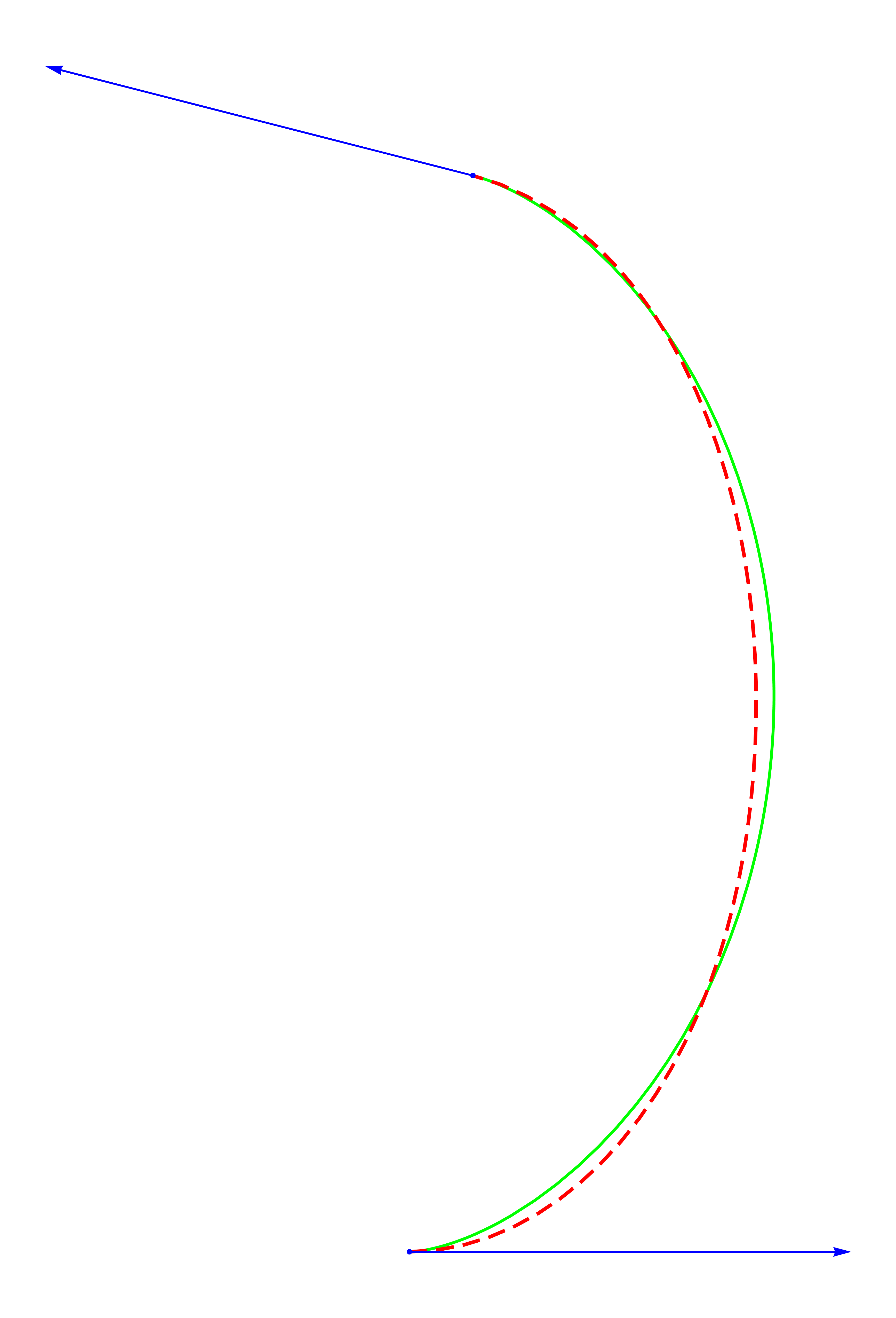}
\end{minipage}
~
\begin{minipage}{0.31\hsize}
\includegraphics[width=\linewidth]{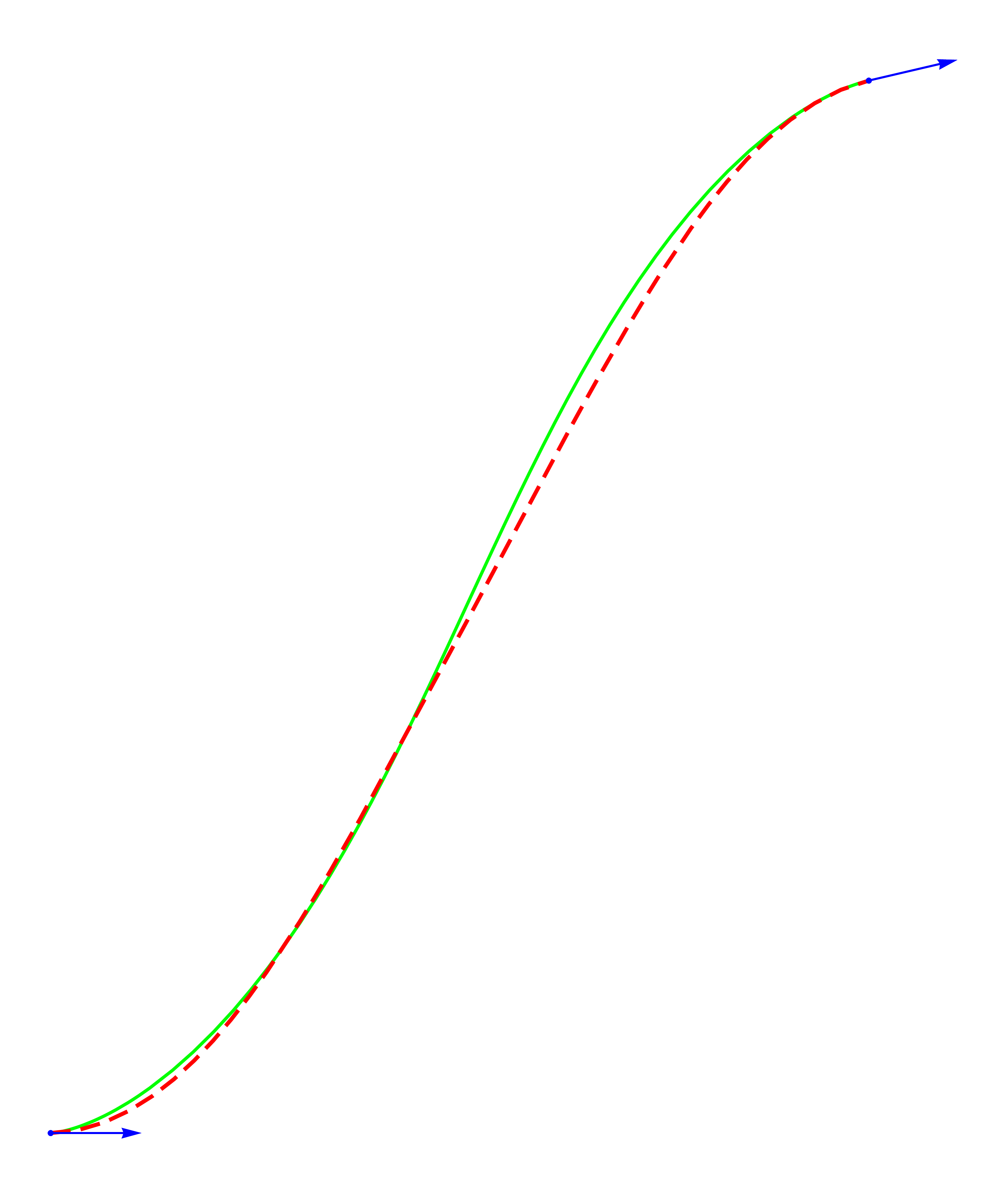}
\end{minipage}
\caption{Comparison of SR-geodesic (green solid line) and Euler's elastica (red dashed line).
\textbf{Top:} obtained by integration of (\ref{eq:hamsys}) and (\ref{eq:hamsysgeodspar}) with the same initial momenta $\lambda(0) = \{(0,0,1),(0.2,0.3, 0.95),(1.5, 0.35, 0.94)\}$ (from left to right).
\textbf{Bottom:} obtained as a solution to the boundary value problem with $g_0 = (0,0,0)$ and $g_1 = \{(0,1,-\pi),(0.03, 0.5, 2.9),(1.8, 2.3, 0.2)\}$ (from left to right), where the length of elastica is taken the same as the spatial length of SR-geodesic. In the left plot the parameter $\xi$ for SR-geodesics is varied. This corresponds to different length of elasticae.}
\label{fig:examp1}
\end{figure}


\begin{thebibliography}{References}
\bibitem{Jurdjevic}
V.~Jurdjevic, Geometric Control Theory, Cambridge Univesrity Press, Cambridge, 1997.
\bibitem{agrachev_sachkov}
A.A.~Agrachev, Yu.L. Sachkov, Control Theory from the Geometric Viewpoint,
Springer-Verlag, 2004.
\bibitem{Euler}
L.~Euler, Methodus inveniendi lineas curves maximi minimive proprietate gaudentes, sive Solutioproblematis isoperimitrici latissimo sensu accepti, Appendix I, ``De curvis elasticis'' (Lausanne, Geneva, 1773; GTTI, Moscow, 1934).
\bibitem{Sachkov2007}
Y.L.~Sachkov, Optimality of Euler's elasticae, Doklady Mathematics, Vol. 76 (2007), No. 3, 817--819.
\bibitem{ElasticaAAA}
A.~Ardentov and Yu.~Sachkov, Solution to Euler's Elastic Problem (in Russian), Avtomatika i Telemekhanika, 4, 2009 pp. 78-88. (English translation in Automation and Remote Control, 70(4), pp. 633-643.)
\bibitem{Laumond}
J.P.~Laumond, Nonholonomic motion planning for mobile robots, Lecture Notes in Control and Information Sciences 229.
Springer (1998).
\bibitem{SachkovMoiseev}
Yu.L.~Sachkov, I.~Moiseev, Maxwell strata in sub-Riemannian problem on the group of motions of a plane. ESAIM Control Optim.
Calc. Var. 16, 380-399 (2010).
\bibitem{SachkovSE2}
Yu.L.~Sachkov, Cut locus and optimal synthesis in the sub-Riemannian problem on the group of motions of a plane. ESAIM
Control Optim. Calc. Var. 17, 2, 293-321 (2011).
\bibitem{Mumford}
D.~Mumford, Elastica and Computer Vision, In: Algebraic Geometry and Its Applications, C. Bajaj (Ed), Springer, Berlin, 1994 pp 491-506.
\bibitem{Mirebeau}
Da Chen, J.-M. Mirebeau and L.D.~Cohen, Global Minimum for Curvature Penalized Minimal Path Method. In Xianghua Xie, Mark W. Jones, and Gary K. L. Tam, editors, Proceedings of the British Machine Vision Conference (BMVC), pages 86.1-86.12. BMVA Press, September 2015.
\bibitem{NMTMA}
A.P.~Mashtakov, A.A.~Ardentov and Y.L.~Sachkov,
Parallel algorithm and software for image inpainting via sub-Riemannian minimizers on the group of rototranslations.
Numerical Mathematics: Theory, Methods and Applications. 2013. V. 6. No 1. PP 95-115.
\bibitem{SIIMS}
E.J.~Bekkers, R.~Duits, A.~Mashtakov and G.R.~Sanguinetti,
A PDE approach to data-driven sub-Riemannian geodesics in SE(2).
SIAM Journal on Imaging Sciences, 2015, 8:4, PP 2740-2770.
\bibitem{Citti_Sarti_B}
G. Citti and A. Sarti, A cortical based model of perceptual completion in the roto-translation
space, J. Math. Imaging Vis., 24 (2006), 307--326.
\bibitem{DuitsJMIV2014}
R.~Duits, U.~Boscain, F.~Rossi and Y.L.~Sachkov,
Association Fields via Cuspless Sub-Riemannian Geodesics in SE(2).
JMIV,
49 (2), (2014), 384--417.
\bibitem{Sachkov_Elastica_1}
Yu.L.~Sachkov, Maxwell strata in the Euler elastic problem, JDCS, 2008, Vol. 14,  No. 2, 169--234.
\bibitem{Sachkov_Elastica_2}
Yu.L.~Sachkov, Conjugate points in Euler's elastic problem, JDCS, 2008, Vol. 14, No. 3, 409--439.
\bibitem{Sachkov_Elastica_3}
Yu.L.~Sachkov and  E.L.~Sachkova,  Exponential mapping in Euler's elastic problem, JDCS, 2014, Vol. 20, 1--25.
\bibitem{Sachkov_SE2_2}
Yu.L.~Sachkov, Conjugate and cut time in sub-Riemannian problem on the group of motions of a plane, ESAIM: COCV, 2010, Vol. 16, 1018--1039.
\bibitem{Pontryagin}
L.S.~Pontryagin, V.G.~Boltyanskii, R.V.~Gamkrelidze and E.F.~Mishchenko, The Mathematical
Theory of Optimal Processes, Pergamon Press, Oxford (1964).
\end{thebibliography}
\end{document}